\DeclareRobustCommand{\em}{%
  \@nomath\em \if b\expandafter\@car\f@series\@nil
  \normalfont \else \bfseries\itshape \fi}
\newtheorem{theorem}[subsection]{Theorem}
\newtheorem{corollary}[subsection]{Corollary}
\newtheorem{lemma}[subsection]{Lemma}
\newtheorem{proposition}[subsection]{Proposition}
\theoremstyle{definition}
\newtheorem{example}[subsection]{Example}
\theoremstyle{remark}
\newtheorem{remark}[subsection]{Remark}
\numberwithin{equation}{section}
\numberwithin{figure}{section}
\newcommand{\B}[1]{{\mathbf #1}}
\newcommand{\C}[1]{{\mathcal #1}}
\newcommand{\OP}{\operatorname}
\begin{document}

\title{On Lipschitz functions on groups equipped with conjugation-invariant norms}
\author{Jarek K\k{e}dra}


\maketitle

\begin{abstract}
We observe that a function on a group equipped with a bi-invariant word metric
is Lipschitz if and only if it is a partial quasimorphism bounded on the generating set.
We also show that an undistorted element is always detected by an antisymmetric homogeneous 
partial quasimorphism. We provide a general homogenisation procedure for Lipschitz
functions and relate partial quasimorphisms on a group to ones on its asymptotic cones.
\end{abstract}

\section{Introduction}

\subsubsection*{Lipschitz functions vs partial quasimorphisms}
Let $(G,d)$ be a metric group, where $d$ is bi-invariant metric. 
Let $\|g\|=d(g,1)$ denote the corresponding {\em norm}.
A function $f\colon G\to \B R$ is called:
\begin{itemize}
\item
a {\em partial quasimorphism} relative to the norm $\|\ \|$
if there exists a constant $D\geq 0$ such that
\begin{equation}
|\delta f(g,h)| = |f(g)-f(gh)+f(h)| \leq D\min\{\|g\|,\|h\|\},
\label{Eq:def-pqm}
\end{equation}
for all $g,h\in G$;
\item 
{\em homogeneous} if
$f(g^n) = nf(g)$ for all $g\in G$ and $n\in \B N$;
\item
{\em antisymmetric} if $f(g^{-1}) = -f(g)$ for all $g\in G$.
\end{itemize}
An element $g\in G$ is called
{\em undistorted} with respect to the norm $\|\ \|$ 
if $\|g^n\|\geq Cn$ for some $C>0$ and all $n\in \B N$.

\begin{theorem}\label{T:main}
Let a group $G$ be equipped with a bi-invariant word metric $d$.  A function
$f\colon G\to \B R$ is Lipschitz if and only if it is a partial quasimorphism
relative to $\|\ \|$ and it is bounded on the generating set.  Moreover, an
element $g\in G$ is undistorted if and only if there exists an antisymmetric 
homogeneous partial quasimorphism $f\colon G\to \B R$ such that $f(g)>0$
and $f$ is bounded on the generating set.
\end{theorem}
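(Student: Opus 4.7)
The theorem comprises two equivalences. For the first, the $(\Leftarrow)$ direction is a telescoping argument: given a partial quasimorphism $f$ of defect $D$ with $|f|\leq K$ on the symmetric generating set $S$, write a geodesic $g^{-1}h=s_1\cdots s_n$ and apply the defining inequality to $(gs_1\cdots s_{i-1},s_i)$ using $\|s_i\|\leq 1$; each step contributes at most $D+K$, giving $|f(g)-f(h)|\leq (D+K)d(g,h)$. For the $(\Rightarrow)$ direction, bi-invariance yields $d(g,gh)=\|h\|=d(1,h)$, so $|f(g)-f(gh)+f(h)-f(1)|\leq 2L\|h\|$ and symmetrically $\leq 2L\|g\|$; after normalising $f(1)=0$ this is the partial quasimorphism inequality, and boundedness on $S$ is automatic. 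For the easy direction of the second equivalence, such an $f$ is Lipschitz by Part~1, so $n|f(g)|=|f(g^n)|\leq L\|g^n\|$ forces $\|g^n\|\geq n|f(g)|/L$, proving undistortion.

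For the hard direction, my plan is to first build a Busemann-type partial quasimorphism and then homogenise. Fix a non-principal ultrafilter $\omega$ on $\mathbb{N}$ (or a Banach mean) and set
\[
b(h)=\lim_{n\to\omega}\bigl(\|g^n\|-\|g^n h\|\bigr).
\]
The triangle inequality together with bi-invariance gives $|b(h)|\leq\|h\|$, so $b$ is bounded by $1$ on $S$. Expanding $b(h_1)-b(h_1h_2)+b(h_2)$ and using bi-invariance (notably $\|g^nh_1h_2\|-\|g^nh_2\|\leq\|h_2^{-1}h_1h_2\|=\|h_1\|$) verifies the partial quasimorphism inequality with defect $2\min(\|h_1\|,\|h_2\|)$. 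I would then invoke the paper's general homogenisation procedure to produce a homogeneous partial quasimorphism $\hat b$ still bounded on $S$, and antisymmetrise via $f(h)=(\hat b(h^{-1})-\hat b(h))/2$; homogeneity, the partial quasimorphism property, and boundedness on $S$ are all preserved by this symmetrisation. The positivity $f(g)>0$ rests on the stable length $\tau(g):=\lim_n\|g^n\|/n$, which is strictly positive by undistortion: a telescoping/averaging computation shows $b(g^k)$ is of order $-k\tau(g)$, so with the antisymmetric sign one obtains $f(g)=\tau(g)>0$.

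The main obstacle is the homogenisation step. A partial quasimorphism's defect is not uniformly bounded (it scales with $\min(\|h_1\|,\|h_2\|)$), so unlike for true quasimorphisms the naive scaling $\hat b(h)=\lim b(h^n)/n$ need not remain a partial quasimorphism and need not even converge cleanly. This is precisely what the paper's general homogenisation procedure is designed to resolve, and one invokes it to close the argument.
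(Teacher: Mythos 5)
Your handling of the first equivalence and of the easy direction of the second is correct and essentially the paper's argument (your single telescoping along a geodesic for the $(\Leftarrow)$ direction is a slightly more direct packaging of the paper's two-step estimate, and your explicit normalisation $f(1)=0$ is harmless). For the hard direction you take a genuinely different route. The paper prescribes $f'(g^n)=cn$ on the cyclic subgroup, where $c$ is the undistortion constant, checks that $f'$ is $1$-Lipschitz there, and extends it to all of $G$ by the McShane formula $f(h)=\inf_n\{f'(g^n)+d(h,g^n)\}$; since the values on $\langle g\rangle$ are linear in the exponent by construction, antisymmetrisation and homogenisation via Proposition \ref{P:homo-pqm} do not change them, and positivity at $g$ is automatic. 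Your Busemann-type function $b(h)=\lim_{\omega}\bigl(\|g^n\|-\|g^nh\|\bigr)$ is an attractive alternative, and your verification that it is $1$-Lipschitz, bounded on $S$, and a partial quasimorphism of defect $2\min\{\|h_1\|,\|h_2\|\}$ is correct.

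The gap is the positivity $f(g)>0$. The claim that a ``telescoping/averaging computation shows $b(g^k)$ is of order $-k\tau(g)$'' is not justified for an ultralimit. Telescoping shows that the Ces\`aro averages over $n$ of the increments $\|g^{n+k}\|-\|g^n\|$ converge to $k\tau(g)$, but an ultralimit of a bounded sequence need not agree with its Ces\`aro limit (compare $0,1,0,1,\dots$, whose ultralimits are $0$ or $1$ while the Ces\`aro limit is $1/2$). The only pointwise estimates available are $-\|g^k\|\leq\|g^{n+k}\|-\|g^n\|\leq\|g^k\|$; writing $\|g^n\|=n\tau(g)+e_n$ with $e_n\geq 0$ sublinear, one gets only $\|g^{n+k}\|-\|g^n\|\geq k\tau(g)-e_n$, and $e_n$ need not be bounded, so there is room for the ultralimit to fall well below $k\tau(g)$ for every $k$, even after dividing by $k$ and passing to the homogenisation. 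Hence $f(g)=\tfrac12\bigl(\hat b(g^{-1})-\hat b(g)\bigr)>0$ is not established as written. Your parenthetical ``or a Banach mean'' is in fact the needed fix rather than an interchangeable option: for a shift-invariant mean $L$ one has $L_n\bigl(\|g^{n+k}\|-\|g^n\|\bigr)=k\,L_n\bigl(\|g^{n+1}\|-\|g^n\|\bigr)=k\tau(g)$, because shift-invariant means agree with Ces\`aro limits when those exist; then $b(g^{\pm k})=\mp k\tau(g)$ exactly, $b$ is already antisymmetric and homogeneous on $\langle g\rangle$, and antisymmetrisation plus homogenisation give $f(g)=\tau(g)>0$ (one should also note that a positive linear mean still converts the pointwise defect estimates into the partial quasimorphism inequality, which it does). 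With that replacement your argument closes, and it has the pleasant feature of producing $f(g)$ equal to the translation length; with an arbitrary non-principal ultrafilter it does not.
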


\begin{remark}
A word metric with respect to a symmetric generating set $S\subseteq G$ is
bi-invariant if and only if $S$ is normal, that is,  $S=g^{-1}Sg$ for
all $g\in G$.  Such generating sets are infinite unless $G$ is virtually
abelian and finitely generated.
\end{remark}

\begin{remark}
A function $f\colon G\to \B R$ is antisymmetric and homogeneous if and only if 
$f(g^n)=nf(g)$ for all $n\in \B Z$ and $g\in G$.
Since quasimorphisms (see Example \ref{E:qm} below) are nearly antisymmetric,
their homogeneity is usually defined with integer exponents \cite[Section 2.2.2]{MR2527432}.
This is not always the case for partial quasimorphims. For example, the translation
length (see Example \ref{E:norm-qm})
is a homogeneous partial quasimorphism. Since it is non-negative, it is not antisymmetric
unless it is zero. See also \cite[Theorem 1.3]{MR2968955} for an example in
symplectic geometry.
\end{remark}

\begin{remark}
It is well known that partial quasimorphisms bounded on generatings sets are
Lipschitz. The above theorem shows the opposite.  Homogeneous
partial quasimorphsms are a common tool to detect undistorted elements and what
we observe above is that {\it every} undistorted element is detected by an {\it antisymmetric}
homogeneous partial quasimorphism.
\end{remark}

\begin{remark}
Considering word norms is not very restrictive. We show in Lemma \ref{L:length-word}
that bi-invariant length metric is equivalent to a suitable word metric. Moreover,
the most natural bi-invariant metrics are either word metrics (e.g., commutator
length, verbal length, fragmentation norm, autonomous norm etc) or length metrics
(e.g. Hofer's norm).
\end{remark}

\begin{example}\label{E:qm}
A function $f\colon G\to \B R$ is called a {\em quasimorphism} if there
exists $D\geq 0$ such that
$$
|f(g)-f(gh)+f(h)|\leq D
$$
for all $g,h\in G$. It is, obviously, a partial quasimorphism.
In \cite{MR3426433}, there are examples of groups $G$ equipped with bi-invariant
word metrics such that
an element $g\in G$ generates an unbounded cyclic subgroup if and only if
it is detected by a homogeneous quasimorphism. 
\hfill $\diamondsuit$
\end{example}

\begin{example}\label{E:Binfty}
The commutator subgroup $[\B B_{\infty},\B B_{\infty}]$ of the infinite braid
group is perfect and does not admit unbounded quasimorphisms \cite{MR2509718}.
On the other hand, it contains elements undistorted with respect to a bi-invariant
word metric \cite{MR3426696}. It follows from Theorem~\ref{T:main} that such
elements are detected by homogeneous partial quasimorphisms. A concrete example
was constructed by Kimura \cite{MR3809596}.
\hfill $\diamondsuit$
\end{example}

A common construction of quasimorphisms is due to Brooks and it is well known
that many groups admit an abundance of quasimorphisms \cite{MR2527432}. For
example, the space of quasimorphisms on a non-elementary hyperbolic group is
infinite dimensional.  On the other hand, systematic constructions of partial
quasimorphisms are rare.  One source is provided by Floer theoretic spectral
invariants in symplectic geometry \cite{MR2208798,MR2968955} and another uses
quasimorphisms \cite{MR4470192,MR3523258,MR3809596}. It would be useful to
know more constructions for finitely generated groups.

\subsubsection*{Homogeneous partial quasimorphisms}

A {\em homogenisation} $\widehat{f}$ of a function $f\colon G\to \B R$ is defined
by
\begin{equation}
\widehat{f}(g) = \lim_{n\to \infty}\frac{f(g^n)}{n}
\label{Eq:homo}
\end{equation}
provided that the limit exists.

If $f$ is a quasimorphism then it follows almost directly from Fekete's Lemma that
the above limit exists. The next result follows from a generalisation of Fekete's Lemma
due to de Bruijn-Erd\"os \cite{MR0047162}.

\begin{proposition}\label{P:fekete}
Let $\varphi\colon \B R_+\to \B R_+$ be an increasing function such that 
$\int_{1}^{\infty}\frac{\varphi(t)}{t^2}dt<\infty$.
Let $f\colon G\to \B R$ be such that
$$
|f(g)-f(gh)+f(h)|\leq \varphi(\|g\|+\|h\|),
$$
where $\|\ \|$ is a norm on $G$.
Then the limit \eqref{Eq:homo} exists and $f$ admits a homogenisation.
\end{proposition}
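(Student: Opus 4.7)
The plan is to reduce the existence of the limit for the sequence $a_n := f(g^n)$ to the de Bruijn--Erd\H{o}s generalisation of Fekete's lemma, which asserts that if a sequence $(a_n)$ satisfies $|a_{n+m}-a_n-a_m|\leq \psi(n+m)$ for some non-decreasing $\psi\colon \mathbb{N}\to \mathbb{R}_+$ with $\sum_{k\geq 1}\psi(k)/k^2<\infty$, then $a_n/n$ converges. So the entire argument consists of converting the hypothesis on $f$ into an input for that lemma.

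First I would fix $g\in G$ and dispose of the trivial case. Taking $g=h=1$ in the hypothesis gives $|f(1)|\leq \varphi(0)$, so if $\|g\|=0$ then $a_n=f(1)$ for all $n$ and the limit is $0$. Otherwise $\|g\|>0$. Applying the hypothesis to $g^n$ and $g^m$ and using subadditivity of the norm ($\|g^k\|\leq k\|g\|$) together with monotonicity of $\varphi$ yields
\[
|a_{n+m}-a_n-a_m| = |f(g^n)-f(g^{n+m})+f(g^m)| \leq \varphi(\|g^n\|+\|g^m\|) \leq \varphi((n+m)\|g\|).
\]

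Setting $\psi(k):=\varphi(k\|g\|)$, this becomes $|a_{n+m}-a_n-a_m|\leq \psi(n+m)$, with $\psi$ increasing. It remains to check the summability condition. By the substitution $s=t\|g\|$,
\[
\int_{1}^{\infty}\frac{\psi(t)}{t^2}\,dt = \int_{1}^{\infty}\frac{\varphi(t\|g\|)}{t^2}\,dt = \|g\|\int_{\|g\|}^{\infty}\frac{\varphi(s)}{s^2}\,ds,
\]
which is finite (splitting the integral at $1$ if $\|g\|<1$ and using that $\varphi$ is bounded near $\|g\|$ by monotonicity). An integral comparison then gives $\sum_{k\geq 1}\psi(k)/k^2<\infty$, so the hypotheses of the de Bruijn--Erd\H{o}s lemma are met and $a_n/n$ converges.

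The main obstacle is essentially notational: one must verify that the two-sided bound on $|a_{n+m}-a_n-a_m|$ indeed falls under the scope of the de Bruijn--Erd\H{o}s statement (which is usually phrased for the one-sided near-subadditivity $a_{n+m}\leq a_n+a_m+\psi(n+m)$) and that the rescaling $k\mapsto k\|g\|$ preserves the integrability condition. Both are routine once the setup above is in place, and the substantive analytic content is already packaged in \cite{MR0047162}.
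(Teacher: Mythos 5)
Your proposal is correct and follows essentially the same route as the paper: both reduce to the de Bruijn--Erd\H{o}s generalisation of Fekete's lemma applied to $a_n=f(g^n)$ with the rescaled control function $\psi(k)=\varphi(k\|g\|)$, whose integrability you verify by the same substitution the paper implicitly relies on. The one subtlety you flag at the end --- that the cited lemma is one-sided and only yields a limit in $[-\infty,\infty)$, so the two-sided hypothesis must be used to exclude $-\infty$ --- is exactly the point the paper addresses by invoking the superadditive counterpart, and your remark that this is routine (e.g.\ by applying the one-sided lemma to $-a_n$ as well) is justified.
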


\begin{example}\label{E:walk}
A standard walk is a function $w\colon \B Z\to \B Z$ such that $w(0)=0$ and
$|w(n)-w(n+1)|=~1$. It is a partial quasimorphism relative to the
absolute value. It is easy to construct a walk such that the sequence
$\frac{w(n)}{n}$ has infinitely many convergent subsequences with
pairwise distinct limits.
\hfill $\diamondsuit$
\end{example}

\begin{example}\label{E:norm-qm}
Perhaps the most tautological example of a Lipschitz function is the norm,
$\|g\| = d(g,1)$. Hence, it follows from Theorem \ref{T:main} that the norm
is a partial quasimorphism and its homogenisation, also known as the
{\it translation length} is a homogeneous partial quasimorphism.
\hfill $\diamondsuit$
\end{example}

\begin{proposition}\label{P:homo-pqm}
Let $f\colon G\to \B R$ be a Lipschitz function with respect to a bi-invariant metric.
Let $\omega\colon 2^{\B N}\to \{0,1\}$
be a non-principal ultrafilter. Let $f_{\omega}\colon G\to \B R$ be defined by
$$
f_{\omega}(g) = \lim_{\omega}\frac{f(g^n)}{n}.
$$
If $\omega$ is a linear ultrafilter (see page \pageref{p:ultrafilter} for
definition) then $f_{\omega}$ is a homogeneous partial quasimorphism relative
to the associated norm.
If $f$ is antisymmetric then so is $f_{\omega}$.
\end{proposition}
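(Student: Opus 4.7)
My plan is to verify the four properties of $f_\omega$ in turn: well-definedness, antisymmetry, homogeneity, and the partial quasimorphism defect bound. Fix a Lipschitz constant $L$ for $f$; by subtracting the constant $f(1)$ (which vanishes in the $\omega$-limit), I may assume $f(1)=0$, so that $|f(g)|\leq L\|g\|$ for every $g\in G$.

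Well-definedness and antisymmetry are immediate. Subadditivity of the norm along powers gives $|f(g^n)/n|\leq L\|g^n\|/n\leq L\|g\|$, so $f(g^n)/n$ is a bounded sequence and its $\omega$-limit exists. If $f$ is antisymmetric, then $f(g^{-n})=-f(g^n)$ termwise, and the $\omega$-limit preserves this, yielding $f_\omega(g^{-1})=-f_\omega(g)$.

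Homogeneity is where linearity of $\omega$ enters in an essential way. I would extract from the definition on page~\pageref{p:ultrafilter} the key consequence that $\lim_\omega a_{kn}=\lim_\omega a_n$ for every bounded sequence $(a_n)$ and every $k\in\B N$. Applied to $a_n=f(g^n)/n$, this yields
\[
f_\omega(g^k)=\lim_\omega\frac{f(g^{kn})}{n}=k\lim_\omega\frac{f(g^{kn})}{kn}=kf_\omega(g).
\]

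The partial quasimorphism bound is the substantive step. Since $f$ is Lipschitz, a short computation using bi-invariance and $|f(g)|\leq L\|g\|$ shows that $f$ itself is a partial quasimorphism with defect constant $2L$: $|f(a)-f(ab)+f(b)|\leq 2L\min\{\|a\|,\|b\|\}$. Applying this to $(g^n,h^n)$, dividing by $n$, and passing to the $\omega$-limit gives
\[
\left|f_\omega(g)+f_\omega(h)-\lim_\omega\frac{f(g^n h^n)}{n}\right|\leq 2L\min\{\tau_\omega(g),\tau_\omega(h)\},
\]
where $\tau_\omega(g):=\lim_\omega\|g^n\|/n$ is the associated norm. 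The remaining and main obstacle is to identify $\lim_\omega f(g^n h^n)/n$ with $f_\omega(gh)=\lim_\omega f((gh)^n)/n$: in a non-abelian $G$ the sequences $g^n h^n$ and $(gh)^n$ can drift linearly apart in the bi-invariant metric, so they are not naively comparable. My plan here is to use linearity of $\omega$ a second time to rescale along $n\mapsto kn$ as $k\to\infty$, comparing the two sequences at their common asymptotic rate and invoking the Lipschitz bound on $f$ to force the two $\omega$-limits to coincide, which closes the defect estimate.
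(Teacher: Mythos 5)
Your setup (well-definedness, antisymmetry, homogeneity via linearity of $\omega$, and the reduction of the defect bound to comparing $\lim_{\omega}f(g^nh^n)/n$ with $\lim_{\omega}f((gh)^n)/n$) matches the paper up to the decisive step, but at that step your plan has a genuine gap. You propose to use linearity of $\omega$ a second time to ``force the two $\omega$-limits to coincide.'' They do not coincide in general, and no rescaling along $n\mapsto kn$ will make them: as you yourself observe, $d\bigl((gh)^n,\,g^nh^n\bigr)$ grows linearly in $n$, so $\bigl|f((gh)^n)-f(g^nh^n)\bigr|/n$ has no reason to tend to $0$ along any subsequence or ultrafilter. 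A strategy whose endpoint is equality of the two limits is therefore aiming at a false statement, and the argument cannot close as described.

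The missing idea is that you do not need the limits to coincide --- you only need their difference to be bounded by a constant times $\min\{\|g\|,\|h\|\}$, which is exactly the shape of a partial quasimorphism defect and can be absorbed into it. The paper supplies this via the identity $g^nh^n=(gh)^n c_1\cdots c_{n-1}$ (Lemma \ref{L:c-trick}), where each $c_i$ is a conjugate of a commutator $[g,x_i]$ (or each of $[h,x_i]$), so that conjugation-invariance of the norm gives $\|c_1\cdots c_{n-1}\|\leq 2(n-1)\min\{\|g\|,\|h\|\}$. The Lipschitz property of $f$ then yields $\bigl|f((gh)^n)-f(g^nh^n)\bigr|\leq 2C(n-1)\min\{\|g\|,\|h\|\}$, and after dividing by $n$ and passing to the $\omega$-limit this contributes $2C\min\{\|g\|,\|h\|\}$ to the defect, giving a total bound of the form $(2C+D)\min\{\|g\|,\|h\|\}$. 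This is the one place where conjugation-invariance of the norm is used in an essential way, and it is absent from your outline; without it (or an equivalent commutator estimate) the defect bound for $f_\omega$ does not follow. The rest of your proposal (boundedness of $f(g^n)/n$, termwise antisymmetry, and homogeneity from $\omega(k\B N)=1$) is correct and agrees with the paper.
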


\begin{remark}
The homogenisation $\widehat{f}$ of a quasimorphism $f$ has a very useful property
that $\sup_{g\in G}|\widehat{f}(g)-f(g)|\leq D$. There is no such control in
the general case of partial quasimorphisms or even for functions from
Proposition \ref{P:fekete}.
\end{remark}

\subsubsection*{Relation to asymptotic cones}

Relations between partial quasimorphisms on verbal subgroups and their
asymptotic cones have been studied first by Calegari and Zhuang
\cite{MR2866929}. The following observations are similar to their results.

Let $\OP{Cone}_{\omega}(G)$ be the asymptotic cone of $G$ equipped with a
bi-invariant metric $d$ with respect to a non-principal ultrafilter $\omega$.
It is a complete metric group (see page \pageref{par:acones} for more details).
Let $\eta\colon G\to \OP{Cone}_{\omega}(G)$ be defined by
$\eta(g)=[g^n]$.
Let $f\colon G\to \B R$ be a function for which there exists $C>0$ such that
$|f(g)|\leq C\|g\|$ for all $g\in G$.
Let $F_{\omega}\colon \OP{Cone}_{\omega}(G)\to \B R$ be defined by
$$
F_{\omega}[g_n]=\lim_{\omega}\frac{f(g_n)}{n}.
$$

\begin{proposition}\label{P:Fomega}
Let $f\colon G\to \B R$ be as above.
\begin{enumerate}
\item 
If $f$ is a (homogeneous) partial quasimorphism then 
$F_{\omega}$ is a (homogeneous) partial quasimorphism.
\item
If $|f(g)-f(gh)+f(h)| \leq \varphi(\|g\|+\|h\|)$, where $\frac{\varphi(n)}{n}\to 0$,
then $F_{\omega}$ is a homomorphism.
\item
$F_{\omega}\circ \eta = f_{\omega}$.
\end{enumerate}
\end{proposition}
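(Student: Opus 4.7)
My plan is to verify the three statements in sequence after first establishing that $F_{\omega}$ is a well-defined function on the asymptotic cone. Once well-definedness is in place, the argument becomes essentially mechanical, since the group operation, the norm, and the defect $\delta$ on $\OP{Cone}_{\omega}(G)$ are all computed coordinate-wise through $\omega$-limits.

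The first thing to check is that $F_{\omega}$ is well defined. A sequence $(g_n)$ representing a point of $\OP{Cone}_{\omega}(G)$ satisfies $\|g_n\|\le Ln$ for some $L$ on an $\omega$-large set, so $|f(g_n)|/n \le CL$ and the ultrafilter limit exists. For independence of the representative I take $[g_n]=[h_n]$, i.e.\ $d(g_n,h_n)/n\to 0$ along $\omega$, and use the cocycle identity to rewrite
\[
f(g_n)-f(h_n) = f(h_n^{-1}g_n) - \delta f(h_n,h_n^{-1}g_n).
\]
By left-invariance $\|h_n^{-1}g_n\|=d(g_n,h_n)$, so after dividing by $n$ the first term is $\omega$-negligible by the linear bound $|f|\le C\|\cdot\|$, and the second is $\omega$-negligible either from the partial quasimorphism inequality in (1) or from the $\varphi$-bound in (2) (applied to a quantity growing at most linearly in $n$, together with $\varphi(n)/n\to 0$).

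I then turn to the three statements. For (1), the identity
\[
\delta F_{\omega}([g_n],[h_n]) = \lim_{\omega}\frac{\delta f(g_n,h_n)}{n}
\]
combined with the partial quasimorphism bound on $f$ yields $|\delta F_{\omega}|\le D\min\{\|[g_n]\|,\|[h_n]\|\}$ directly from the cone norm formula $\|[g_n]\|=\lim_{\omega}\|g_n\|/n$; homogeneity of $F_{\omega}$ drops out of $f(g_n^k)=kf(g_n)$. For (2), the same display bounds the defect by $\lim_{\omega}\varphi(\|g_n\|+\|h_n\|)/n$, which vanishes because $\|g_n\|+\|h_n\|$ is $\omega$-dominated by a linear function of $n$ while $\varphi(n)/n\to 0$; hence $F_{\omega}$ is a homomorphism. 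Finally, (3) is an unraveling of definitions: $F_{\omega}(\eta(g))=F_{\omega}([g^n])=\lim_{\omega}f(g^n)/n=f_{\omega}(g)$.

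I do not anticipate a serious obstacle. The only delicate point is the well-definedness step, where one must control $f(g_n)-f(h_n)$ in terms of $d(g_n,h_n)$ without presupposing any Lipschitz property of $f$; the cocycle rewriting above is the standard manoeuvre that makes this work under either set of hypotheses.
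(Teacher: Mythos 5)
Your proof is correct and follows essentially the same route as the paper: the defect of $F_{\omega}$ is computed coordinate-wise as the $\omega$-limit of $\delta f(g_n,h_n)/n$ and then bounded by the hypothesis on $f$, with homogeneity and item (3) falling out of the definitions. The only difference is that you also verify well-definedness of $F_{\omega}$ on the quotient via the cocycle rewriting $f(g_n)-f(h_n)=f(h_n^{-1}g_n)-\delta f(h_n,h_n^{-1}g_n)$, a point the paper leaves implicit; this is a welcome addition, not a divergence.
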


\begin{proposition}\label{P:F-f}
Let $G$ be equipped with a bi-invariant word metric.
If $F\colon \OP{Cone}_{\omega}(G)\to~\B R$ is a Lipschitz function such that $F(1)=0$ 
then $F\circ \eta\colon G\to \B R$ is a partial quasimorphism.
\end{proposition}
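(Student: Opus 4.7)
The plan is to reduce the partial-quasimorphism inequality for $F\circ\eta$ to three distance estimates in the asymptotic cone, using only the Lipschitz property of $F$ and the bi-invariance of the norm on $G$. Since $F$ is $L$-Lipschitz and $F(1)=0$, we have $|F(x)-F(y)|\le L\,d_\omega(x,y)$ and $|F(x)|\le L\,d_\omega(x,1)$. Writing
\begin{equation*}
F(\eta(gh))-F(\eta(g))-F(\eta(h)) = \bigl[F(\eta(gh))-F(\eta(g))\bigr]-\bigl[F(\eta(h))-F(1)\bigr]
\end{equation*}
reduces the matter to establishing three bounds: $d_\omega(\eta(g),1)\le \|g\|$, $d_\omega(\eta(gh),\eta(g))\le \|h\|$, and the symmetric $d_\omega(\eta(gh),\eta(h))\le \|g\|$. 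The first is immediate from subadditivity $\|g^n\|\le n\|g\|$, which in turn follows from left-invariance of $d$.

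The central step is the second bound, and I would prove it by a telescope. Set $x_k:=g^{n-k}(gh)^k$ for $k=0,\dots,n$, so $x_0=g^n$ and $x_n=(gh)^n$. A direct computation gives
\begin{equation*}
x_k\,x_{k+1}^{-1} = g^{n-k}(gh)^k(gh)^{-(k+1)}g^{-(n-k-1)} = g^{n-k}\,h^{-1}\,g^{-(n-k)},
\end{equation*}
which is a conjugate of $h^{-1}$, and therefore has norm $\|h\|$ by bi-invariance. The triangle inequality then yields $d(g^n,(gh)^n)\le n\|h\|$, and dividing by $n$ and taking the $\omega$-limit gives $d_\omega(\eta(g),\eta(gh))\le \|h\|$. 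For the symmetric bound I would use the telescope $y_k:=(gh)^k h^{n-k}$, whose increments $y_k y_{k+1}^{-1}=(gh)^k g^{-1}(gh)^{-k}$ are conjugates of $g^{-1}$ and hence of norm $\|g\|$.

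Assembling the pieces, the displayed decomposition gives $|(F\circ\eta)(g)-(F\circ\eta)(gh)+(F\circ\eta)(h)|\le L\|h\|+L\|h\|=2L\|h\|$, and regrouping as $[F(\eta(gh))-F(\eta(h))]-[F(\eta(g))-F(1)]$ yields the analogous bound $2L\|g\|$. Therefore $|\delta(F\circ\eta)(g,h)|\le 2L\min\{\|g\|,\|h\|\}$, so $F\circ\eta$ is a partial quasimorphism with defect constant $2L$. The main obstacle is spotting the right telescope, that is, realising that incrementally exchanging a factor $g$ for a factor $gh$ inside the product $g^n$ perturbs the element by a conjugate of $h^{-1}$; once that identity is written down, bi-invariance finishes the distance estimate and the remainder is bookkeeping. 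Note in particular that the word-metric hypothesis is used only through bi-invariance, so the same argument applies to any bi-invariant metric on $G$.
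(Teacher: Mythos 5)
Your proof is correct, and it takes a genuinely different and in fact more economical route than the paper. The paper first transfers the problem to the cone: it invokes Lemma \ref{L:length-word} and Corollary \ref{C:acone} to replace $d_{\omega}$ by a word metric $d_{\omega S}$ on $\OP{Cone}_{\omega}(G)$, applies Theorem \ref{T:main} to conclude that $F$ is itself a partial quasimorphism on the cone, and then controls the discrepancy between $\eta(g)\eta(h)$ and $\eta(gh)$ via the commutator identity of Lemma \ref{L:c-trick}, $g^nh^n=(gh)^nc_1\dots c_{n-1}$ with $\|c_1\dots c_{n-1}\|\leq 2(n-1)\min\{\|g\|,\|h\|\}$; the bookkeeping yields the constant $24C$. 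You instead prove directly that the three cone distances $d_{\omega}(\eta(h),1)$, $d_{\omega}(\eta(g),\eta(gh))$ and $d_{\omega}(\eta(h),\eta(gh))$ are bounded by $\|h\|$, $\|h\|$ and $\|g\|$ respectively, the key point being the telescope $x_k=g^{n-k}(gh)^k$ whose consecutive quotients $x_kx_{k+1}^{-1}=g^{n-k}h^{-1}g^{-(n-k)}$ are single conjugates of $h^{-1}$ (your identity is a cleaner cousin of Lemma \ref{L:c-trick}: it exhibits $(gh)^n(g^n)^{-1}$ as a product of $n$ conjugates of $h$, rather than comparing $(gh)^n$ with $g^nh^n$ up to commutators). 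With these distance bounds, plain Lipschitz continuity of $F$ together with $F(1)=0$ gives the defect bound $2L\min\{\|g\|,\|h\|\}$ after the two regroupings, which I checked and which are sign-correct. What your approach buys: it avoids Theorem \ref{T:main}, Lemma \ref{L:length-word} and Lemma \ref{L:c-trick} entirely, gives a much better constant ($2L$ versus $24C$), and, as you rightly note, never uses that the metric on $G$ is a word metric --- only bi-invariance --- so it proves a slightly stronger statement. The one thing worth adding explicitly is the remark that $\|g^n\|\leq n\|g\|$ also guarantees $(g^n)\in\prod_0G$, so that $\eta$ is well defined; you use this bound anyway, so it is only a matter of saying so.
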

\begin{remark}
If $F\colon \OP{Cone}_{\omega}(G)\to \B R$ is a Lipschitz homomorphism then $F\circ \eta$ is
also a partial quasimorphism. That is, $F$ being a homomorphism does not seem to imply a stronger
statement on $F\circ \eta$. The situation is a bit different for the commutator length.
\end{remark}

\begin{example}\label{P:cl}
If $F\colon \OP{Cone}_{\omega}([G,G],\OP{cl})\to \B R$ is a Lipschitz homomorphism then
$F\circ \eta$ is a quasimorphism. This is a special case of a result by
Calegari-Zhuang \cite[Section 3]{MR2866929}. The proof is similar to the proof of
Proposition \ref{P:F-f}.
\hfill $\diamondsuit$
\end{example}

\paragraph{Acknowledgements.}
This work was funded by Leverhulme Trust Research Project Grant RPG-2017-159.
The author was partially supported by the Polish NCN grant 2017/27/B/ST1/01467.
We thank Danny Calegari for comments on an earlier version of this paper and
the anonymous referee for careful reading and comments.

\section{Definitions and supporting results}

\paragraph{Norms.}
A function $\|\ \|\colon G\to \B R$ is called a {\em norm} if it satisfies
the following conditions for all $g,h\in G$:
\begin{enumerate}
\item $\|g\|\geq 0$ and $\|g\|=0$ if and only if $g=1_G$;
\item $\|g^{-1}\| = \|g\|$;
\item $\|gh\| \leq \|g\| + \|h\|$;
\end{enumerate}
a norm $\|\ \|$ is called {\em conjugation-invariant} if moreover
\begin{enumerate}
\setcounter{enumi}{3}
\item $\|h^{-1}gh\| = \|g\|$.
\end{enumerate}
The associated metric defined by $d(g,h)=\|gh^{-1}\|$ is right-invariant.
If $\|\ \|$ is conjugation-invariant then $d$ is bi-invariant, that is, both left-
and right-invariant. 

\begin{example}\label{E:word-norm}
If $S=S^{-1}\subseteq G$ is a symmetric generating set then
$$
\|g\|_S = \min\{n\in \B N\ |\ g=s_1\dots s_n,\ s_i\in S\}
$$
is a norm called the {\em word norm} associated with the generating set $S$.
If $S$ is normal, that is, $S=g^{-1}Sg$ for all $g\in G$ then the associated
word norm is conjugation-invariant. We will usually omit the subscript when it
does not lead to confusion.
\hfill $\diamondsuit$
\end{example}

\begin{example}\label{E:cl}
If $S$ is the set of all commutators $[g,h]\in [G,G]$, where $g,h\in G$ then
the corresponding word norm on the commutator subgroup is called {\em commutator length}
and it is relatively well understood \cite{MR2527432}. It is invariant under conjugations
by elements of~$G$.
\hfill $\diamondsuit$
\end{example}

\begin{example}\label{E:max-word}
If $S$ is a union of finitely many conjugacy classes of $G$ then the corresponding word
norm is {\it maximal} in the sense that the identity homomorphism is Lipschitz from
$\|\ \|_S$ to any other bi-invariant norm. We will refer to this word norm as the
{\em maximal word norm} on $G$.
\hfill $\diamondsuit$
\end{example}

\paragraph{Lipschitz functions.}\label{p:lipschitz}
A function $f\colon G\to \B R$ is {\em Lipschitz} with constant $C>0$ if
$|f(g)-f(h)|\leq Cd(g,h)$ holds for all $g,h\in G$. It is Lipschitz
{\it with respect to the norm} if $|f(g)|\leq C\|g\|$ holds for all
$g\in G$.

\paragraph{Ultrafilters.}\label{p:ultrafilter}
An {\em ultrafilter} $\omega$ on the set $\B N$ of natural numbers is
a maximal (with respect to inclusion) 
filter on the power set $2^{\B N}$ (with the partial order given by inclusion of sets).
Equivalently it is a finitely additive measure $\omega \colon 2^{\B N}\to \{0,1\}$.
The equivalence is given by saying that sets of full measure belong to the filter.
An ultrafilter is called non-principal if every finite set is of measure zero.
If $a\colon \B N\to X$ is a bounded sequence in a metric space 
$(X,d)$ then the {\em ultralimit} with respect to an ultrafilter $\omega$ is
defined by following condition:
$\lim_{\omega} a_n = a$
if and only if for every $\varepsilon >0$
$$
\omega\{n\in \B N\ |\ d(a_n,a)<\varepsilon\} = 1.
$$

An ultrafilter on $\B N$ is called {\em linear} if it contains all sets of the
form $k\B N$, where $0<k\in \B N$. Such ultrafilter exists. Indeed, let $\C F$
be a collection containing all sets of the form $0<k\B N$, where $k\in \B N$,
and all their supersets. If $A,B\in \C F$ then $k_A\B N\subseteq A$ and 
$k_B\B N\subseteq B$, for some $k_A,k_B\in \B N\setminus\{0\}$. Then
$$
\OP{lcm}(k_A,k_B)\B N=k_A\B N\cap k_B\B N \subseteq A\cap B,
$$
which shows that $A\cap B\in \C F$ and hence it is a filter since it is
nonempty and upward closed by definition. It is thus contained in some 
non-principal ultrafilter.

\paragraph{Asymptotic cones.}\label{par:acones}
Let $\omega$ be a non-principal ultrafilter.
Let $G$ be equipped with a bi-invariant metric $d$ and let $\|\ \|$ denote the
associated norm.
Let $\prod_0 G = \{(g_n)\in G^{\B N}\ |\ \|g_n\| = O(n)\}$ be the set of
sequences of elements of $G$ such that their norms grow at most linearly.
It is a group with pointwise multiplication and
$$
\|(g_n)\|_{\omega} = \lim_{\omega}\frac{\|g_n\|}{n}
$$
defines a degenerate norm, where degenerate means that some elements
can have norm equal to zero. These elements form a normal subgroup
and the corresponding quotient group $\OP{Cone}_{\omega}(G,d)$ is
called the {\em asymptotic cone} of $(G,d)$ \cite{MR2866929}.
It is a complete metric group with the metric
$d_{\omega}$ associated to the norm $\|\ \|_{\omega}$ defined above.
See Dru\c{t}u-Kapovich \cite{MR3753580} for a general and systematic approach
to asymptotic cones of metric spaces. See also the thesis of Jakob Schneider
\cite{schneider} for a metric ultraproduct approach.

\begin{example}\label{E:acone}
In general, very little is known about the topological or algebraic structure
of asymptotic cones of groups with bi-invariant metrics.  Here is a sample of
relatively easy facts.

\begin{enumerate}
\item 
The asymptotic cone of a free abelian group $\B Z^n$ equipped with its standard word
metric is isometric to $\B R^n$ with the $L^1$-metric.

\item 
Let $G$ be a finitely generated nilpotent group equipped with the maximal
bi-invariant word metric. The abelianisation $G\to G/[G,G]$ is a quasi-isometry
and hence the asymptotic cone of $G$ is isometrically isomorphic to $\B R^n$,
where $n$ is the rank of the abelianisation.

\item
The asymptotic cone of the infinite symmetric group $\B S_{\infty}$ equipped
with a bi-invariant word metric associated with any finite normally generating set is
a simple contractible metric group \cite[Theorem 5.1]{2203.10889}.

\item 

The asymptotic cone $\OP{Cone}(\B F_2)$ of the free group $\B F_2=\langle
a,b\rangle$ on two generators with respect to the maximal bi-invariant word
metric is non-separable. Indeed, there is a quasi-isometric embedding of a
regular tree $T\to \OP{Cay}(\B F_2)$ and hence the asymptotic cone contains an
isometrically embedded $\B R$-tree.  Since the section $\B Z^2\to \B F_2$ of
the abelianisation given by $(m,n) \mapsto a^mb^n$ is an isometric embedding
the cone $\OP{Cone}_{\omega}(\B F_2)$ contains many flats, i.e., isometrically
embedded copies of $\B R^2$.

\item 
The asymptotic cone $\OP{Cone}_{\omega}([G,G],\OP{cl})$ of $[G,G]$ equipped
with the commutator length is abelian.
\end{enumerate}
\hfill $\diamondsuit$
\end{example}

\paragraph{Word norms on length groups.}
\begin{lemma}\label{L:length-word}
Let $G$ be a metric group with a bi-invariant metric $d$. Assume that
$(G,d)$ is a length space.
Let $S=B(1)\subseteq G$ be the ball of radius $1$ centred at the identity
and let $d_S$ denote the corresponding word metric. Then the identity is
a quasi-isometry between $d$ and $d_S$. More precisely, we have
$$
\|g\|\leq \|g\|_S \leq \|g\|+1.
$$
\end{lemma}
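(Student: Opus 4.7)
The plan is to prove the two inequalities separately. The lower bound $\|g\|\leq\|g\|_S$ is immediate from the triangle inequality: any presentation $g=s_1\cdots s_n$ with $s_i\in S=B(1)$ satisfies $\|g\|\leq\sum_{i=1}^{n}\|s_i\|\leq n$, and taking the infimum over such factorisations gives the claim.

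For the upper bound $\|g\|_S\leq\|g\|+1$, the idea is to sample a nearly-minimising path from $1$ to $g$ at spacing at most $1$ and read off the resulting factorisation. Fix $\epsilon>0$. The length-space hypothesis produces a path from $1$ to $g$ of length at most $\|g\|+\epsilon$; reparametrising by arc length (which is standard for rectifiable paths in metric spaces) gives $\gamma\colon [0,L]\to G$ with $\gamma(0)=1$, $\gamma(L)=g$, $L\leq\|g\|+\epsilon$, and the Lipschitz estimate $d(\gamma(s),\gamma(t))\leq|s-t|$ for all $s,t\in[0,L]$. Setting $n=\lceil L\rceil$, $t_i=iL/n$, $g_i=\gamma(t_i)$, and $s_i=g_{i-1}^{-1}g_i$, one gets $\|s_i\|=d(g_{i-1},g_i)\leq L/n\leq 1$, so each $s_i\in S$, and $g=s_1\cdots s_n$. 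This shows $\|g\|_S\leq n=\lceil L\rceil\leq\lceil\|g\|+\epsilon\rceil$.

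To finish, I let $\epsilon\to 0^{+}$. When $\|g\|$ is not an integer, $\lceil\|g\|+\epsilon\rceil=\lceil\|g\|\rceil\leq\|g\|+1$ for $\epsilon$ sufficiently small; when $\|g\|$ is a positive integer, $\lceil\|g\|+\epsilon\rceil=\|g\|+1$ for $\epsilon\in(0,1]$; and the case $g=1$ is trivial. In each case $\|g\|_S\leq\|g\|+1$. The only mildly technical step is the arc-length reparametrisation, but this is a standard construction relying on continuity of the partial length function $s\mapsto L(\gamma|_{[0,s]})$ together with the definition of length as a supremum over partitions, so I do not anticipate a real obstacle.
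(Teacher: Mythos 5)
Your proposal is correct and follows essentially the same route as the paper: the lower bound is the triangle inequality applied to a factorisation of $g$ into elements of $B(1)$, and the upper bound comes from subdividing a nearly length-minimising path into segments of length at most $1$. Your version is just more explicit about the arc-length reparametrisation and the $\epsilon\to 0^+$ bookkeeping, which the paper leaves implicit.
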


\begin{proof}
Let $\|g\|=d(g,1)$ and let $\|g\|_S=d_S(g,1)$ be the corresponding norms.
Let $\C L$ denote the length of rectifiable paths.
Since the identity is a homomorphism, it is enough to verify the statement
for the norms. 

Let $\varepsilon>0$. Let $g\in G$ and let $\{g_t\}$ be a path
from the identity to $g$ such that $\C L\{g_t\}\leq \|g\|+\varepsilon$. 
By subdividing the path into segments of length
$1$ and the last segment of possibly smaller length we see that
$\|g\|_S \leq \|g\| + 1$. 

If $\|g\|_S=n$ then $g=s_1\dots s_n$, where $\|s_i\|\leq 1$. Thus there is a path
from the identity to $g$ of length at most $n+\varepsilon$ which shows that
$\|g\|\leq n = \|g\|_S$. Thus the identity is a quasi-isometry.
\end{proof}

\begin{corollary}\label{C:acone}
The metrics $d_{\omega}$ and the word metric $d_{\omega S}$ on the 
asymptotic cone $\OP{Cone}_{\omega}(G)$ are quasi-isometric.
\qed
\end{corollary}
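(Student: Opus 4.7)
The plan is to lift the two-sided inequality $\|g\|\leq \|g\|_S\leq \|g\|+1$ from Lemma \ref{L:length-word} directly to the asymptotic cone. First I would check that the group $\prod_0 G$ of linearly-growing sequences is the same whether we measure growth with $\|\ \|$ or with $\|\ \|_S$: since the two norms differ by at most $1$, we have $\|g_n\|=O(n)$ if and only if $\|g_n\|_S=O(n)$, so the cone construction starts from the same ambient group of sequences in both cases.

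Next, for any sequence $(g_n)\in\prod_0 G$, I would divide the pointwise bound of Lemma \ref{L:length-word} by $n$, obtaining
$$
\frac{\|g_n\|}{n}\leq \frac{\|g_n\|_S}{n}\leq \frac{\|g_n\|}{n}+\frac{1}{n},
$$
and take the ultralimit with respect to $\omega$. Since $\lim_{\omega}\tfrac{1}{n}=0$, the two outer quantities agree and therefore $\|(g_n)\|_{\omega}=\|(g_n)\|_{\omega S}$. In particular, the normal subgroups of null sequences coincide for the two norms, the underlying sets of the two asymptotic cones are canonically identified, and the two resulting norms on $\OP{Cone}_{\omega}(G)$ are in fact equal on the nose. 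Thus the identity is an isometry, and hence a fortiori a quasi-isometry.

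There is essentially no obstacle here beyond checking this routine bookkeeping: once Lemma \ref{L:length-word} is in hand, the statement is a one-line ultralimit computation, because the additive discrepancy of $1$ between the two norms on $G$ is killed by the rescaling $\tfrac{1}{n}$ built into the asymptotic cone construction. The only point requiring a moment's care is that $\prod_0 G$ and its null subgroup are intrinsic to the bi-Lipschitz class of the norm on $G$, which is immediate from the inequality above.
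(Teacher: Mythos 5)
Your computation is correct as far as it goes, but it proves a different statement from the one the corollary asserts, because you have misread the object $d_{\omega S}$. You take $d_{\omega S}$ to be the ultralimit of the word metric $d_S$ on $G$ (with $S=B(1)\subseteq G$), i.e.\ the cone metric obtained by running the cone construction on $(G,d_S)$ instead of $(G,d)$, and you show that this coincides with $d_{\omega}$. In the paper, however, $d_{\omega S}$ is a word metric \emph{on the cone itself}: one observes that $(\OP{Cone}_{\omega}(G),d_{\omega})$ is again a length group, takes $S=B(1)$ to be the unit ball \emph{of the cone}, and lets $d_{\omega S}$ be the associated word metric on $\OP{Cone}_{\omega}(G)$. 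The intended proof is then nothing more than Lemma \ref{L:length-word} applied to the length group $(\OP{Cone}_{\omega}(G),d_{\omega})$, which yields $\|x\|_{\omega}\leq\|x\|_{\omega S}\leq\|x\|_{\omega}+1$; the only content beyond the lemma is the (standard, but worth recording) fact that the asymptotic cone of a group with a bi-invariant word metric is a geodesic, hence length, space. That this is the intended reading is forced by how the corollary is used in the proof of Proposition \ref{P:F-f}: there the inequality $\|x\|_{\omega S}\leq\|x\|_{\omega}+1$ is invoked with a genuine additive error of $1$, and, more importantly, Theorem \ref{T:main} is applied to a Lipschitz function on the cone relative to $d_{\omega S}$. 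Theorem \ref{T:main} requires a bi-invariant \emph{word} metric, and an ultralimit of word metrics is not a word metric on the cone; your version of the statement would not supply the structure needed there.

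To be clear, nothing in your bookkeeping is wrong: the observation that $\prod_0 G$, the null subgroup, and the limiting norm are unchanged when the norm on $G$ is perturbed by a bounded additive amount is true and occasionally useful. But it answers the question ``does replacing $d$ by $d_S$ on $G$ change the cone?'' rather than the question the corollary is actually asking, namely ``is $d_{\omega}$ quasi-isometric to a word metric defined intrinsically on $\OP{Cone}_{\omega}(G)$?'' To repair the proof, verify that $(\OP{Cone}_{\omega}(G),d_{\omega})$ is a length space and then quote Lemma \ref{L:length-word} with the cone playing the role of $G$.
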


\paragraph{A useful identity.}
The following identity will be used several times in subsequent proofs.
Its straightforward proof is left to the reader.
\begin{lemma}\label{L:c-trick}
Let $G$ be equipped with a conjugation-invariant norm $\|\ \|$.
Then for all $g,h\in G$ and $n\in \B N$
\begin{equation}
g^nh^n = (gh)^n c_1 \dots c_{n-1},
\label{Eq:c-trick}
\end{equation}
where all $c_i$'s are conjugates of $[g,x_i]$ for some $x_i\in G$ or
all $c_i$'s are conjugates of $[h,x_i]$ for some $x_i\in G$.  In particular,
$$
\|c_1\dots c_n \| \leq 2(n-1)\min\{\|g\|,\|h\|\}.
$$
\qed
\end{lemma}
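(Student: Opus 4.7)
The plan is to prove the identity by induction on $n$, with the base case $n=1$ being trivial ($gh = gh$ and the empty product of $c_i$'s has norm $0$). For the inductive step, I would write $g^{n+1}h^{n+1} = g \cdot (g^n h^n) \cdot h$ and apply the induction hypothesis to obtain $g^n h^n = (gh)^n c_1 \cdots c_{n-1}$, so that
$$g^{n+1}h^{n+1} = g \cdot (gh)^n \cdot c_1 \cdots c_{n-1} \cdot h.$$

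The core manipulation is to pull the leading $g$ past the block $(gh)^n$. Using $g \cdot (gh)^n = (gh)^n \cdot \bigl[(gh)^{-n} g (gh)^n\bigr]$ and the identity $g^{-1}(gh)^{-n}g(gh)^n = [g^{-1},(gh)^{-n}]$, the expression becomes
$$g^{n+1}h^{n+1} = (gh)^{n+1} \cdot h^{-1}[g^{-1},(gh)^{-n}] \cdot c_1 \cdots c_{n-1} \cdot h.$$
Inserting $hh^{-1}$ between consecutive factors lets me rewrite the trailing block as a product of conjugates by $h$:
$$(h^{-1}[g^{-1},(gh)^{-n}]h)(h^{-1}c_1 h)\cdots(h^{-1}c_{n-1}h) = d_1 d_2 \cdots d_n,$$
where $d_1$ is a conjugate of the commutator $[g^{-1},(gh)^{-n}]$ (equivalently, a conjugate of $[g,y]$ for a suitable $y$, by the standard identities $[a^{-1},b] = a^{-1}[b,a]a$ and $[b,a]=[a,b]^{-1}$), and each $d_{i+1}$ is a conjugate of $c_i$, hence still a conjugate of $[g,x_i']$ for some $x_i'$.

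For the norm estimate, conjugation-invariance gives $\|[g,x]\| \leq \|g\| + \|xg^{-1}x^{-1}\| = 2\|g\|$ for any $x$, so $\|d_1\| \leq 2\|g\|$, and $\|d_2 \cdots d_n\| = \|c_1 \cdots c_{n-1}\|$, which by the induction hypothesis is at most $2(n-1)\|g\|$. The triangle inequality then gives $\|d_1 \cdots d_n\| \leq 2n\|g\|$, closing the induction. The alternative form (with $c_i$'s conjugate to $[h,x_i]$) follows either by a mirror induction starting from $g^{n+1}h^{n+1} = g \cdot g^n h^n \cdot h$ but pulling the trailing $h$ back through $(gh)^n$ instead, or by applying the version just proved to the pair $(h^{-1},g^{-1})$ and inverting, using $(gh)^{-1} = h^{-1}g^{-1}$. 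Taking the better of the two decompositions yields the claimed bound $2(n-1)\min\{\|g\|,\|h\|\}$.

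There is no serious obstacle here; the proof is essentially routine commutator bookkeeping. The only mild subtlety is ensuring that the freshly produced commutator $[g^{-1},(gh)^{-n}]$ can legitimately be written as a conjugate of $[g,x_i]$ in the sense of the statement, which is handled by the elementary identities above and is irrelevant for the norm bound since $\|g^{-1}\| = \|g\|$.
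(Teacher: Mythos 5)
Your induction is correct and is exactly the ``straightforward proof'' the paper leaves to the reader: conjugating the stray $g$ (resp.\ $h$) across $(gh)^n$, absorbing the resulting commutator, and using $\|[g,x]\|\le\|g\|+\|xg^{-1}x^{-1}\|=2\|g\|$ together with conjugation-invariance for the norm bound. No gaps; the only discrepancy is the paper's own index typo ($c_1\dots c_n$ versus $c_1\dots c_{n-1}$ in the displayed estimate), which your argument resolves in the intended way.
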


\section{Proofs}

\subsection{Proof of Theorem \ref{T:main}}
Suppose that $f\colon G\to \B R$ is a partial quasimorphism bounded on the generating set.
Let $\sigma = \sup_{s\in S}f(s)$. Let $\|h\|=n$ and let $h=s_1\dots s_n$ for some $s_i \in S$.
First observe that $f$ is Lipschitz with respect to the norm.
\begin{align*}
|f(h)| &= |f(s_1\dots s_n)|\\
&\leq \sum_{i=1}^n |f(s_i)| + (n-1) D\\
&\leq (\sigma + D) n = (\sigma + D)\|h\|.
\end{align*}
Now we use the above to show that $f$ is Lipschitz.
\begin{align*}
|f(g) - f(gh)| &\leq |f(g) - f(gh) + f(h)| + |f(h)|\\
&\leq D\|h\| + (\sigma + D)\|h\|\\ 
&= (\sigma + 2D)\|h\| = (\sigma + 2D)d(g,gh)
\end{align*}

Conversely, assume that $f$ is Lipschitz with constant $C>0$. Let $s\in S$ be a generator. 
\begin{align*}
|f(s)| - |f(1)| &\leq |f(s)-f(1)| \leq Cd(s,1) = C\\
|f(s)| &\leq C + |f(1)|\\
\end{align*}
which shows that $f$ is bounded on the generating set.
\begin{align*}
|f(g) - f(gh) + f(h)| 
&\leq |f(g) - f(gh)| + |f(h)-f(1)| + |f(1)|\\
&\leq C\|h\| + C\|h\| + |f(1)|\\
&\leq (2C + |f(1)|) \|h\|.
\end{align*}
In the last inequality we use the fact that $d$ is a word metric, hence, $\|h\|\geq 1$ if $h\neq 1_G$.
A similar computation and the assumption that $d$ is bi-invariant 
shows the bound by $(2C + |f(1)|)\|g\|$. The bi-invariance implies that
$d(g,gh)=\|h\|$ and $d(gh,h)=\|g\|$. This proves the first statement of the theorem.

Let $f\colon G\to \B R$ be an antisymmetric homogeneous quasimorphism such that $|f(g)|>0$ 
and $f$ is bounded (by $\sigma$) on the generating set.
Then
$n|f(g)| = |f(g^n)|\leq (\sigma + D)\|g^n\|$, which shows that $g$ is undistorted.
Notice that the antisymmetry is not used here. 

Conversely, let $g$ be undistorted, that is, $\|g^n\|\geq c|n|$ for some $c>0$. 
Let $f'\colon \langle g\rangle \to \B R$ be defined by $f'(g^n) = cn$ for all $n\in \B Z$.
Then $f'$ is antisymmetric homogeneous Lipschitz on $\langle g\rangle$, the subgroup
generated by $g$, equipped with the induced metric. Indeed,
\begin{align*}
|f'(g^n) - f'(g^m)| &= |cn-cm| \leq \|g^{n-m}\| = d(g^n,g^m),\\
\end{align*}
which shows that $f'$ is Lipschitz with constant $1$. Antisymmetry and homogeneity are clear.
In the following part of the proof we construct an extension $f$ of $f'$ that is also Lipschitz.
Let $f\colon G\to \B R$ be defined by
$$
f(h) = \inf \{f'(g^n) + d(h,g^n)\ |\ n\in \B Z\}
$$
First we show that $f$ is well defined.
\begin{align*}
f'(g^n) + d(h,g^n) 
&= f'(g) - f'(g) + f'(g^n) + d(h,g^n)\\
&\geq f'(g) - d(g,g^n) + d(h,g^n) \\
&\geq f'(g) - d(g,h) > -\infty.\\
\end{align*}
Moreover $f(g^n) = f'(g^n)$, so $f$ is an extension of $f'$.

Let $\varepsilon >0$ and let $h,h'\in G$ be any two elements. Without loss of
generality, we can assume that $f(h')-f(h)\geq 0$.  Let $g_h\in \langle
g\rangle$ be an element such that
$$
f(h) \geq f(g_h) + d(h,g_h)-\epsilon.
$$
Then by definition of $f$ we have also that $f(h')\leq f(g_h)+d(h',g_h)$. 
Consequently,
\begin{align*}
0\leq f(h') - f(h) &\leq f(g_h) + d(h',g_h) - f(g_h) - d(h,g_h) + \epsilon \\
&\leq d(h',g_h) + d(g_h,h) + \epsilon \\
&\leq d(h',h) + \epsilon,
\end{align*}
which shows that $f$ is Lipschitz (with constant $1$) and bounded
on the generating set.

Let $\overline{f}(g) = \frac{1}{2}\left( f(g)-f(g^{-1}) \right)$ be the {\em anti-symmetrisation}
of $f$. Homogenising $\overline{f}$ with respect
to a linear ultrafilter $\omega$ as in Proposition \ref{P:homo-pqm} does not change $f'$
and hence~$\overline{f}_{\omega}$ is the required antisymmetric homogeneous partial quasimorphism.
Boundedness on the generating set is clearly preserved by anti-simetrisation and homogenisation.
\qed

\subsection{Proof of Proposition \ref{P:fekete}}

Theorem 23 in \cite{MR0047162} states that if $\varphi \colon \B R_+\to \B R_+$
is an increasing function such that 
\begin{equation}
\int_1^{\infty}\frac{\varphi(t)}{t^2}dt <\infty
\label{Eq:fekete}
\end{equation}
and $a\colon \B N\to \B R$ is a sequence such that
$$
a(m+n) \leq a(m) + a(n) + \varphi(m+n) 
$$
then $-\infty \leq \lim_{n\to \infty} \frac{a(n)}{n}< \infty$.
Let $f\colon G\to \B R$ be a function satisfying the hypothesis of Proposition \ref{P:fekete}
and let $g\in G$.
It follows that
\begin{align*}
|f(g^m) - f(g^{m+n}) + f(g^n)| &\leq \varphi \left( \|g^m\|+\|g^n\| \right)\\
f(g^{m+n}) &\leq f(g^m) + f(g^n) + \varphi \left( \|g^m\|+\|g^n\| \right)\\
&\leq f(g^m) + f(g^n) + \varphi \left( (m+n)\|g\| \right).\\
\end{align*}
Since the function $n\mapsto \varphi(n\|g\|)$ also satisfies the integral
condition \eqref{Eq:fekete} the statement of the de Brujin-Erd\"os applies.
If we use only the above subadditivity then can only conclude that
the limit $\lim_{n\to \infty}\frac{f(g^n)}{n}$ either exists or it is $-\infty$.
However, we also have that
$$
f(g^m) + f(g^n) \leq f(g^{m+n}) + \varphi \left( \|g^m\|+\|g^n\| \right)
$$
and the modification of de Bruijn-Erd\"os proof with the use of Fekete's Lemma for
superadditive sequences shows that the limit is bigger than~$-\infty$.\qed

\subsection{Proof of Proposition \ref{P:homo-pqm}}

Since $f$ is Lipschitz and the associated norm satisfies the triangle
inequality, we have that $|f(g^n)| \leq C\|g^n\| \leq C\|g\|n$. Consequently,
the sequence $\frac{f(g^n)}{n}$ is bounded and its $\omega$-limit exists.  
The following computation shows that $f_{\omega}$ is a partial quasimorphism.
\begin{align*}
|f_{\omega}(g) - f_{\omega}(gh) + f_{\omega}(h)|
&= \lim_{\omega} \frac{|f(g^n)-f\left( (gh)^n\right)+f(h^n)|}{n}\\
&= \lim_{\omega} \frac{|f(g^n)-f(g^nh^nc_1\dots c_{n-1})+f(h^n)|}{n}\\
&\leq \lim_{\omega} \frac{|f(g^n)-f(g^nh^n)+f(h^n)|+|f(c_1\dots c_{n-1})|}{n}\\
&\leq \lim_{\omega} \frac{D\min\{\|g^n\|,\|h^n\|\}+C\|c_1\dots c_{n-1}\|}{n}\\
&\leq \lim_{\omega} \frac{D\min\{n\|g\|,n\|h\|\}+2C\min\{(n-1)\|g\|,(n-1)\|h\|\}}{n}\\
&\leq (2C+D)\min\{\|g\|,\|h\|\}.\\
\end{align*}
If $\omega$ is linear and $k\in \B N$ then we have that
$$
f_{\omega}(g^k) 
= \lim_{\omega}\frac{f(g^{kn})}{n}
= k\lim_{\omega}\frac{f(g^{kn})}{kn}
= k\lim_{\omega}\frac{f(g^{n})}{n}
$$
since $\omega(k\B N)=1$. This shows that $f_{\omega}$ is homogeneous.

\subsection{Proof of Proposition \ref{P:Fomega}}

If $f\colon G\to \B R$ is a partial quasimorphism then the following computation
shows that $F_{\omega}$ is also a partial quasimorphism. 
\begin{align*}
|F_{\omega}[g_n] - F_{\omega}[g_nh_n] + F_{\omega}[h_n]|
&\leq \lim_{\omega} \frac{|f(g_n)-f(g_nh_n)+f(h_n)|}{n}\\
&\leq \lim_{\omega} \frac{D\min\{\|g_n\|,\|h_n\|\}}{n}\\
&\leq D\min\{\|[g_n]\|_{\omega},\|[h_n]\|_{\omega}\}.
\end{align*}
It is straightforward to see that if $f$ homogeneous then so is $F_{\omega}$. This proves the
first item. The second item also follows from the above computation with the assumed estimate.
The third item is immediate.
\qed

\subsection{Proof of Proposition \ref{P:F-f}}
Let $F\colon \OP{Cone}_{\omega}(G)\to \B R$ be a Lipschitz function with constant $C>0$
and such that $F(1)=0$.
If follows from Lemma \ref{L:length-word} that it is also Lipschitz with respect
to the word metric $d_{\omega S}$ with the same constant $C$. Furthermore,
Theorem \ref{T:main} implies that $F$ is a partial quasimorphism relative
to $d_{\omega S}$ with constant $2C + F(1)=2C$.

Assume that neither $g$ or $h$ is equal to the identity. Otherwise the computation
below is trivial. Moreover, since the metric on $G$ is a word metric, we have that
$\|g\|+1\leq 2\|g\|$ for $g\neq 1_G$, which is used below. Since
$C=\sup_{s\in S}F(s)$, we have that $|F(g)|\leq 3C\|g\|_{\omega S}$, (see the beginning of 
the proof of Theorem \ref{T:main}), which we also use below.
The following computation finishes the proof.
\begin{align*}
|F(\eta(g)) - F(\eta(gh)) +& F(\eta(h))|\\
&=|F[g^n] - F[(gh)^n] + F[h^n]|\\
&=|F[g^n] - F[g^nh^nc_1\dots c_{n-1}] + F[h^n]|\\
&\leq |F[g^n] - F[g^nh^n] + F[h^n]| + |F[c_1\dots c_{n-1}]| + 2C\|c_1\dots c_{n-1}\|_{\omega S}\\
&\leq 2C\min\{\|[g^n]\|_{\omega S},\|[h^n]\|_{\omega S}\} + 5C\|c_1\dots c_{n-1}\|_{\omega S}\\ 
&\leq 2C\min\{\|[g^n]\|_{\omega}+1,\|[h^n]\|_{\omega}+1\} + 5C(\|c_1\dots c_{n-1}\|_{\omega}+1)\\ 
&\leq 2C\min\{\|g\|+1,\|h\|+1\} + 5C\left(\lim_{\omega}\frac{\|c_1\dots c_{n-1}\|}{n}+1\right)\\ 
&\leq 4C\min\{\|g\|,\|h\|\} + 5C\left(\lim_{\omega}\frac{2n\min\{\|g\|,\|h\|\}}{n}+1\right)\\ 
&\leq 4C\min\{\|g\|,\|h\|\} + 20C\min\{\|g\|,\|h\|\}\\ 
&\leq 24C\min\{\|g\|,\|h\|\}.\\
\end{align*}
\qed

\bibliography{/home/kedra/sync/bibliography}
\bibliographystyle{plain}

\bigskip
\bigskip
\noindent
Jarek K\k{e}dra\\ 
University of Aberdeen and University of Szczecin \\
Email: {\tt kedra@abdn.ac.uk}
\end{document}